\documentclass[11pt,letterpaper]{article}

\usepackage[margin=1in]{geometry}
\usepackage{amsmath,amssymb,amsthm,mathtools}
\usepackage{microtype}
\usepackage[hidelinks]{hyperref}
\numberwithin{equation}{section}

\newtheorem{theorem}{Theorem}[section]
\newtheorem{proposition}[theorem]{Proposition}
\newtheorem{lemma}[theorem]{Lemma}
\newtheorem{question}[theorem]{Question}
\theoremstyle{remark}

\newcommand{\R}{\mathbb R}
\newcommand{\Sd}{S^{d-1}}
\newcommand{\op}{\mathrm{op}}
\newcommand{\Isom}{\operatorname{Isom}}
\newcommand{\dis}{\operatorname{dis}}

\title{The Uniform Gromov--Hausdorff Gap Problem for Approximating\\
Spheres by Finite Homogeneous Spaces\\[0.8em]
\large\itshape Into the Abyss with Open Eyes}
\author{Itai Benjamini}
\date{August 2026}

\begin{document}

\maketitle

\begin{abstract}
Let $S^n$ be the unit round sphere with its intrinsic angular metric, normalized so that
$\operatorname{diam}S^n=\pi$. For finite homogeneous metric spaces $X$, put
\[
  \delta_n=\inf_X d_{GH}(X,S^n).
\]
The main open problem is whether $\inf_{n\ge2}\delta_n>0$. Gelander's theorem gives
$\delta_n>0$ in each fixed dimension, but not uniformly. An abstract cross-polytope
construction gives the universal upper bound $\delta_n\le\pi/4$. In the opposite direction,
ChatGPT combines the passage from small Gromov--Hausdorff error to an approximate finite
action on the sphere, logarithmic stability of approximate inner-product-preserving maps due
to Cuesta, operator-norm stability of almost representations, and Green's width theorem for
finite transitive sets. This gives the quantitative bound
\[
  \delta_n\ge \frac{c}{(1+\log(n+1))^2}
\]
for all sufficiently large $n$. The remaining logarithm is closely related to the known gap in
the optimal stability theory for almost symmetries of Hilbert space.
\end{abstract}

\section{The problem}

A finite metric space $X$ is \emph{homogeneous} if its full isometry group acts transitively
on its points. We study
\[
  \delta_n:=\inf\{d_{GH}(X,S^n):X\text{ is finite and homogeneous}\}.
\]
We use the correspondence formula
\[
  d_{GH}(Y,Z)=\frac12\inf_R\dis(R),
\]
where $R\subseteq Y\times Z$ ranges over correspondences and
\[
  \dis(R)=\sup\{|d_Y(y,y')-d_Z(z,z')|:(y,z),(y',z')\in R\}.
\]
The circle is exceptional: regular polygons show that $\delta_1=0$.

\begin{theorem}[Gelander \cite{Gelander}]
For each $n\ge2$, one has $\delta_n>0$. More generally, a compact manifold which is a
Gromov--Hausdorff limit of finite homogeneous metric spaces is a torus.
\end{theorem}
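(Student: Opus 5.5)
We will prove the general statement; the first assertion follows because $S^n$ for $n\ge2$ is not a torus. Indeed, $\pi_1(S^n)=0$, while a torus of positive dimension has infinite fundamental group. Moreover, if $\delta_n=0$ there is a sequence of finite homogeneous $X_k$ with $X_k\to S^n$, so $S^n$ would be such a limit.

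\emph{Step 1: the limit is a homogeneous space of a compact group.} Let $X_k$ be finite homogeneous with $X_k\to M$. Choose correspondences $R_k$ with $\dis(R_k)\to0$. Each $G_k=\Isom(X_k)$ acts transitively. By a Grove--Petersen/Fukaya--Yamaguchi type argument, which uses Arzel\`a--Ascoli on $\epsilon_k$-isometries, the groups $G_k$ converge in an equivariant Gromov--Hausdorff sense, after passing to a subsequence, to a closed subgroup $G\le\Isom(M)$ acting transitively. Thus $M=G/H$ with $G$ a compact Lie group, since $\Isom(M)$ is Lie by Myers--Steenrod. The key additional information is that $G$ is an ultralimit-type limit of \emph{finite} groups acting with the approximation property. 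In particular, every element of $G$ is approximated by elements of finite groups $G_k$ that act almost isometrically, and the multiplication is approximately respected.

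\emph{Step 2: Jordan-type obstruction.} This is the main obstacle. We would show that the identity component $G^0$ is abelian. Use Turing's theorem / the Kazhdan--Margulis--Zassenhaus (Jordan) phenomenon: a compact Lie group that is a limit of finite groups, in the sense that finite subsets are approximated by $\epsilon$-homomorphic images of finite groups, has abelian identity component. Concretely, we embed $G\subset U(N)$ faithfully. By stability of almost representations of finite (amenable) groups, due to Kazhdan, the approximating maps $G_k\to G$ can be perturbed into genuine homomorphisms $\rho_k:G_k\to U(N)$ whose images become $\epsilon_k$-dense in $G$. Jordan's theorem then gives abelian normal subgroups $A_k\le\rho_k(G_k)$ of index at most $J(N)$. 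A limit of the $A_k$ is a closed abelian subgroup of $G$ of finite index, so $G^0$ is abelian, hence a torus. The delicate point is ensuring that the almost-action genuinely produces $\epsilon$-homomorphisms with uniformly controlled defect on all of $G_k$, not just on generators. For this we use the fact that $G_k$ acts by exact isometries on $X_k$, so compositions are controlled by $\dis(R_k)$.

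\emph{Step 3: conclude.} Since $M$ is connected, $G^0$ acts transitively, so $M=T/(T\cap H)$ with $T$ a torus. A quotient of a torus by a closed subgroup is again a torus. Hence $M$ is a torus, and $\delta_n>0$ for $n\ge2$.
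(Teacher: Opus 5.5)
The paper does not prove this theorem. It is quoted from Gelander, so there is no internal argument to compare against. Your outline is the standard route, and essentially Gelander's: extract a compact limit group acting transitively, then show by Kazhdan stability plus Jordan's theorem (Turing's theorem) that a compact Lie group approximated by finite groups has abelian identity component. Your Step 2 is a qualitative, fixed-dimension version of the paper's Sections 5--6:
\begin{itemize}
\item You use the same almost action $f_\gamma(u)=a_{\gamma s(u)}$.
\item Its defect is uniform over all pairs because $\Gamma$ acts by exact isometries, which is exactly how the paper handles the ``delicate point'' you flag.
\item You make the same appeal to Kazhdan.
\item Compactness replaces Cuesta's linearization, and Jordan's theorem replaces Green's width theorem.
\end{itemize}
Because $J(N)$ and your subsequence arguments are not quantitative, your route gives $\delta_n>0$ only dimension by dimension. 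The paper's route yields the explicit bound $c/(1+\log d)^2$.

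One step needs repair, and only for the general statement. Myers--Steenrod does not apply, because the limit metric on $M$ need not be Riemannian, or even a length metric. For example, rescaled grid Cayley graphs of $(\mathbb Z/k)^2$ converge to an $\ell^1$ flat torus, and regular polygons with the chordal metric converge to a chordal circle. What Step 2 actually needs is that $G$ is a Lie group, so that it embeds faithfully in some $U(N)$. This follows from Montgomery--Zippin: a compact group acting effectively and transitively on a connected manifold is a Lie group. Your $G$ acts effectively as a subgroup of $\Isom(M)$, and transitively by Step 1.

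For the first assertion no repair is needed, since $M=S^n$ carries the round metric and $\Isom(M)=O(n+1)$. Finally, the statement must be read for \emph{connected} manifolds, as you implicitly do in Step 3. Otherwise it fails: $\mathbb Z/2\times\mathbb Z/k$, with a circle metric on each copy and a large constant distance between the copies, converges to two disjoint circles.
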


Gelander's theorem is fixed-dimensional. The central question is its dimension-free form.

\begin{question}[Uniform gap problem]
Does there exist $\varepsilon_0>0$ such that
\[
  d_{GH}(X,S^n)\ge\varepsilon_0
\]
for every $n\ge2$ and every finite homogeneous metric space $X$?
\end{question}

Finite homogeneous spaces in this question are abstract: they need not be subsets of $S^n$.
This freedom makes the problem substantially different from approximation by finite
orthogonal orbits.

The proof below follows a four-step reduction. A small-distortion correspondence first turns
the isometry group of $X$ into an approximate action on the sphere. A stability theorem of
Cuesta replaces each approximate symmetry by an orthogonal map. Kazhdan's stability theorem
then corrects these maps simultaneously to a genuine orthogonal representation. Transitivity
of $X$ makes one orbit of that representation dense, while Green's width theorem prevents a
finite orthogonal orbit from being too dense.

\section{The cross-polytope upper bound}

Fix a finite antipodal $\eta$-net
\[
  P=\{\pm p_1,\ldots,\pm p_N\}\subset S^n,
  \qquad \eta\le\pi/4,
\]
with the pairs $\{\pm p_i\}$ distinct. Let $C_N=\{\pm1,\ldots,\pm N\}$ and give it the
metric
\[
  d_{C_N}(\varepsilon i,\varepsilon'j)=
  \begin{cases}
    0, & i=j,\ \varepsilon=\varepsilon',\\
    \pi, & i=j,\ \varepsilon=-\varepsilon',\\
    \pi/2, & i\ne j.
  \end{cases}
\]
This is the angular metric on the vertices of a cross-polytope, and signed permutations act
transitively on it.

\begin{proposition}
For every $n\ge1$,
\[
  \delta_n\le\pi/4.
\]
\end{proposition}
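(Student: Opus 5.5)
The plan is to use the cross-polytope space $C_N$ just introduced as the finite homogeneous space. I will build an explicit correspondence between $C_N$ and $S^n$ and show its distortion is at most $\pi/2$. The correspondence formula then gives $d_{GH}(C_N,S^n)\le\pi/4$. Homogeneity of $C_N$ is already noted in the text: signed permutations act transitively and preserve $d_{C_N}$.

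\textbf{Existence of the net.} By compactness, $S^n$ has a finite $\eta$-net $Q$ for, say, $\eta=\pi/4$. Set $P=Q\cup(-Q)$, which is again an $\eta$-net. Choose one representative from each antipodal pair to get $p_1,\dots,p_N$ with distinct pairs $\{\pm p_i\}$.

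\textbf{The correspondence.} Let
\[
R=\{(\varepsilon i,x)\in C_N\times S^n:\ d(x,\varepsilon p_i)\le\eta\}.
\]
Every $x\in S^n$ is related to something, because $P$ is an $\eta$-net. Every $\varepsilon i$ is related to something, namely the point $\varepsilon p_i$ itself. So $R$ is a correspondence.

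\textbf{The distortion bound.} Take $(\varepsilon i,x),(\varepsilon' j,y)\in R$. I split into three cases according to the definition of $d_{C_N}$.
\begin{itemize}
\item If $i=j$ and $\varepsilon=\varepsilon'$, the triangle inequality gives $d(x,y)\le 2\eta$. The discrepancy with $0$ is therefore at most $2\eta\le\pi/2$.
\item If $i=j$ and $\varepsilon=-\varepsilon'$, then $x$ and $y$ lie within $\eta$ of antipodal points. Since $d(x,-z)=\pi-d(x,z)$, this gives $d(x,y)\ge\pi-2\eta$. The discrepancy with $\pi$ is again at most $2\eta\le\pi/2$.
\item If $i\neq j$, then $d_{C_N}=\pi/2$ while $d(x,y)\in[0,\pi]$. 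The discrepancy is at most $\pi/2$ for trivial reasons.
\end{itemize}
Hence $\dis(R)\le\max(2\eta,\pi/2)=\pi/2$, and so $\delta_n\le d_{GH}(C_N,S^n)\le\tfrac12\dis(R)\le\pi/4$.

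\textbf{Main obstacle.} There is no real obstacle here. The only point needing care is the choice $\eta\le\pi/4$, which makes the same-index cases no worse than the unavoidable $\pi/2$ error in the case $i\ne j$. Checking the antipodal identity $d(x,-z)=\pi-d(x,z)$ in the angular metric is routine. The argument works for every $n\ge1$, including $n=1$, where it is consistent with (but weaker than) $\delta_1=0$.
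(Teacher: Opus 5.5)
Your proof is correct and follows the paper's argument: the same cross-polytope space $C_N$ built from an antipodal $\eta$-net with $\eta\le\pi/4$, and the same three-case estimate giving $\dis(R)\le\pi/2$. The only difference is cosmetic: you use the full relation $\{(\varepsilon i,x): d(x,\varepsilon p_i)\le\eta\}$, whereas the paper uses the graph of a labeling map $q$ with $q(\varepsilon p_i)=\varepsilon i$, and both are valid correspondences.
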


\begin{proof}
Choose a map $q:S^n\to C_N$ such that $d_S(y,\varepsilon p_i)\le\eta$ whenever
$q(y)=\varepsilon i$, and require $q(\varepsilon p_i)=\varepsilon i$. Thus $q$ is
surjective, and
\[
  R=\{(q(y),y):y\in S^n\}
\]
is a correspondence. If two points have the same label, their distance is at most $2\eta$;
if their labels are antipodal, their distance differs from $\pi$ by at most $2\eta$; and for
two other distinct labels the discrepancy from $\pi/2$ is at most $\pi/2$. Hence
\[
  \dis(R)\le\max\{2\eta,\pi/2\}=\pi/2,
\]
so $d_{GH}(C_N,S^n)\le\pi/4$.
\end{proof}

\begin{proposition}
If $Y$ is connected and compact and $N\ge2$, then
\[
  d_{GH}(C_N,Y)\ge\pi/4.
\]
\end{proposition}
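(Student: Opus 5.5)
The plan is to argue by contradiction through the correspondence formula for $d_{GH}$: if $d_{GH}(C_N,Y)<\pi/4$, then there is a correspondence $R\subseteq C_N\times Y$ with $D:=\dis(R)<\pi/2$. I will use this $R$ to split $Y$ into finitely many nonempty pieces that are pairwise separated by a positive distance. That contradicts connectedness.

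\emph{Step 1: the pieces.} For each label $x\in C_N$ put $A_x=\{y\in Y:(x,y)\in R\}$. Since $R$ is a correspondence, each $A_x$ is nonempty and the sets $A_x$ cover $Y$. There are $2N\ge 4$ labels, so there are at least two such sets.

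\emph{Step 2: uniform separation.} Take distinct labels $x\ne x'$ and points $y\in A_x$, $y'\in A_{x'}$. Then $d_{C_N}(x,x')\in\{\pi/2,\pi\}$, and the distortion bound gives
\[
d_Y(y,y')\ge d_{C_N}(x,x')-D\ge \pi/2-D=:\rho>0 .
\]
So $d_Y(A_x,A_{x'})\ge\rho$ whenever $x\ne x'$. In particular the sets $A_x$ are pairwise disjoint.

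\emph{Step 3: disconnection.} Fix one label $x_0$ and let $B$ be the union of the $A_x$ with $x\ne x_0$. Then $Y=A_{x_0}\sqcup B$, both pieces are nonempty, and $d_Y(A_{x_0},B)\ge\rho$. Hence $\overline{A_{x_0}}$ is disjoint from $B$ (a limit point would sit at distance $0$ from $A_{x_0}$ but at distance $\ge\rho$ from $B$), and similarly $\overline{B}$ is disjoint from $A_{x_0}$. So both pieces are closed, which gives a separation of $Y$ and contradicts connectedness. Therefore every correspondence has $\dis(R)\ge\pi/2$, and so $d_{GH}(C_N,Y)\ge\pi/4$.

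The only point needing care is that $d_{GH}$ is an infimum, so the contradiction hypothesis must be the strict inequality $d_{GH}<\pi/4$; this is what produces a single $R$ with $D<\pi/2$ and hence $\rho>0$. Compactness of $Y$ is not actually needed beyond the setting. The one step with content is the uniform separation in Step 2, and it works because every off-diagonal distance in $C_N$ is at least $\pi/2$. Thus no real obstacle is expected.
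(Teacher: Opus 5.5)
Your proof is correct and follows the paper's argument. Both take a correspondence with distortion below $\pi/2$, use the fibers over the points of $C_N$, and derive the uniform separation $\pi/2-\dis(R)>0$ between distinct fibers from the fact that all off-diagonal distances in $C_N$ are at least $\pi/2$; both then conclude that $Y$ splits into a nontrivial clopen partition, the only cosmetic difference being that you check that $A_{x_0}$ and its complement are closed via distances, whereas the paper notes directly that each fiber is open.
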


\begin{proof}
Suppose a correspondence $R\subset C_N\times Y$ has distortion $\alpha<\pi/2$. For
$c\in C_N$, set
\[
  F_c=\{y\in Y:(c,y)\in R\}.
\]
The nonempty sets $F_c$ cover $Y$. If $c\ne c'$, then every $y\in F_c$ and
$y'\in F_{c'}$ satisfy
\[
  d_Y(y,y')\ge d_{C_N}(c,c')-\alpha\ge\pi/2-\alpha>0.
\]
Thus the fibers are pairwise disjoint and uniformly separated. Each $F_c$ is consequently
open in $Y$, and its complement, being the union of the other fibers, is open as well. They
form a finite nontrivial clopen partition of $Y$, contrary to connectedness.
\end{proof}

This proves sharpness only within the cross-polytope family, not among all finite homogeneous
metrics.

\section{The embedded benchmark}

Put $d=n+1$ and define
\[
  \beta_d=\inf_{\substack{G\le O(d)\text{ finite}\\u\in\Sd}}
  d_H(G u,\Sd),
\]
where $d_H$ is intrinsic spherical Hausdorff distance. Since $Gu\subseteq\Sd$, this is the
least possible covering radius of a finite orthogonal orbit.

\begin{theorem}[Green \cite{Green}]
There are universal constants $C>0$ and $d_0$ such that, for $d\ge d_0$,
\[
  \arccos\!\left(\frac{C}{\sqrt{\log d}}\right)
  \le\beta_d\le
  \arccos\!\left(\frac1{\sqrt{H_d}}\right),
  \qquad H_d=\sum_{k=1}^d\frac1k.
\]
In particular,
\[
  \beta_d=\frac\pi2-\Theta((\log d)^{-1/2}).
\]
\end{theorem}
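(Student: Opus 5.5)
The upper bound comes from an explicit orbit of the hyperoctahedral group. The lower bound reduces, via the structure theory of finite linear groups, to a union bound over random directions. In both parts I use that, for unit vectors, the intrinsic distance is $\arccos\langle x,y\rangle$. So $d_H(Gu,\Sd)\le\arccos s$ means exactly that every unit $x$ has some $g\in G$ with $\langle x,gu\rangle\ge s$.

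\emph{Upper bound.} Take $G=\{\pm1\}^d\rtimes S_d$, the signed permutation matrices, and let $u$ have coordinates $u_k=1/\sqrt{kH_d}$, which is a unit vector. Given a unit $x$, pick $g$ that moves the largest $|x_i|$ to position $1$, the next largest to position $2$, and so on, with signs chosen to match. Writing $a_1\ge a_2\ge\dots\ge a_d\ge0$ for the decreasing rearrangement of $|x|$, this gives
\[
\langle x,gu\rangle=\frac{1}{\sqrt{H_d}}\sum_k \frac{a_k}{\sqrt k}.
\]
Monotonicity gives $ka_k^2\le\sum_{j\le k}a_j^2\le1$, so $a_k\le1/\sqrt k$. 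Hence $a_k/\sqrt k\ge a_k^2$, and summing yields $\sum_k a_k/\sqrt k\ge1$. Therefore $\langle x,gu\rangle\ge1/\sqrt{H_d}$, and $\beta_d\le\arccos(1/\sqrt{H_d})$.

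\emph{Lower bound.} I must show that for every finite $G\le O(d)$ and unit $u$ there is a unit $x$ with $\langle x,gu\rangle\le C/\sqrt{\log d}$ for all $g$.

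1. Try $x$ uniform on $\Sd$. For each fixed $g$, $\Pr(\langle x,gu\rangle>t)\le e^{-dt^2/2}$. A union bound then works whenever $\log|Gu|\lesssim d/\log d$ with $t\asymp1/\sqrt{\log d}$. It also works with the weaker $t\asymp\sqrt{\log|Gu|/d}$ in general.

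2. The union bound fails for large orbits, for example the hyperoctahedral orbit with $\log|Gu|\sim d\log d$. I would therefore use structure. Decompose $\R^d$ into $G$-irreducibles, and each irreducible into a system of imprimitivity $V=V_1\oplus\dots\oplus V_m$ whose block stabilizers act primitively on the blocks.

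3. For primitive irreducible pieces, invoke a Jordan/Collins-type bound: modulo scalars, and after discarding an abelian normal part that only contributes signs and phases, such groups have order at most $\exp(O(\dim\log\dim))$, or better in the genuinely primitive case. The random-$x$ argument restricted to the piece then gives inner products $O(1/\sqrt{\log\dim})$ there.

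4. For imprimitive and reducible pieces, choose $x$ blockwise and control $\langle x,gu\rangle=\sum_j\langle x_{\sigma(j)},g_ju_j\rangle$. The $L^2$ mass of $u$ splits across blocks. The worst case mimics the upper-bound computation: block weights decreasing like $1/\sqrt k$ against an adversarial permutation. Choosing each $x_j$ nearly orthogonal to the local orbit of $u_j$, with small random signs across blocks, should give the bound $C/\sqrt{\log d}$. The $\sqrt{\log d}$ loss arises precisely from the harmonic sum over block sizes.

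\emph{Main obstacle.} The hardest step is the lower bound, specifically steps 3--4. The reduction needs a dimension-uniform bound on primitive finite linear groups. It also needs a recursion through nested wreath-product structure in which the losses from different levels do not accumulate beyond a single $\sqrt{\log d}$ factor. I would first try induction on $d$ over the imprimitivity tree, with the inductive hypothesis phrased as a bound on the width $\min_x\max_g\langle x,gu\rangle$ weighted by block masses.
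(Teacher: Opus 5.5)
Your upper bound is correct. The pointwise estimate $a_k\le k^{-1/2}$, hence $a_k/\sqrt k\ge a_k^2$, is a slightly shorter substitute for the paper's layer-cake decomposition $b=\sum_k(b_k-b_{k+1})\mathbf 1_{[k]}$ combined with $\sum_{i\le k}i^{-1/2}\ge\sqrt k$. The gap is in the lower bound. The paper does not prove the width estimate; it quotes Green's width theorem: for every finite $G\le O(d)$ and $u\in\Sd$ there is a unit $w$ with $\sup_{g\in G}|\langle gu,w\rangle|\le C/\sqrt{\log d}$. It then reads off $d_S(w,gu)\ge\arccos(C/\sqrt{\log d})$ for all $g$. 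Your steps 2--4 try to reprove that theorem, and as written they do not succeed.

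Step 3 fails on its own terms. Your step 1 shows that the random-$x$ union bound reaches $t\asymp1/\sqrt{\log d}$ only when $\log|Gu|\lesssim d/\log d$. An order bound $\exp(O(d\log d))$ for primitive groups is far larger than this, and it is attained by genuinely primitive groups. For instance, $S_{d+1}$ acting on the sum-zero hyperplane of $\R^{d+1}$ is irreducible and primitive for $d\ge4$, and any $u$ there with distinct coordinates has $|Gu|=(d+1)!$. Then $|Gu|e^{-dt^2/2}<1$ forces $t\gtrsim\sqrt{\log d}$, which exceeds $1$, so the union bound says nothing. These symmetric-group-type primitive constituents need a separate, non-probabilistic choice of $x$, backed by structural information about large primitive groups rather than an order count. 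Step 4 is only a heuristic (``should give''). The non-accumulation of losses through nested imprimitivity systems, which you flag as the main obstacle, is exactly the unproved content.

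The repair is to use Green's width theorem as a cited black box, as the paper does; the statement is attributed to Green. The lower bound is then one line. The ``in particular'' follows from $\pi/2-\arccos s=\arcsin s\in[s,\pi s/2]$ together with $H_d\sim\log d$.
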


\begin{proof}
Green's width theorem gives, for every finite $G\le O(d)$ and $u\in\Sd$, a unit vector
$w$ such that
\[
  \sup_{g\in G}|\langle gu,w\rangle|\le \frac{C}{\sqrt{\log d}}.
\]
Therefore $w$ is at angular distance at least
$\arccos(C/\sqrt{\log d})$ from the whole orbit, proving the lower bound.

For the upper bound, use Green's signed-permutation example: take the orbit of
\[
  a=\frac1{\sqrt{H_d}}
  \left(1,\frac1{\sqrt2},\ldots,\frac1{\sqrt d}\right)
\]
under signed coordinate permutations. Given $u\in\Sd$, let
$b_1\ge\cdots\ge b_d\ge0$ be its absolute coordinates in decreasing order. A suitable
signed permutation of $a$ has inner product
\[
  \frac1{\sqrt{H_d}}\sum_{i=1}^d\frac{b_i}{\sqrt i}
\]
with $u$. Put $b_{d+1}=0$. Since
\[
  b=\sum_{k=1}^d(b_k-b_{k+1})\mathbf 1_{[k]}
\]
and $\sum_{i=1}^k i^{-1/2}\ge\sqrt k=\|\mathbf 1_{[k]}\|_2$, the triangle inequality gives
\[
  \sum_{i=1}^d\frac{b_i}{\sqrt i}
  \ge\sum_{k=1}^d(b_k-b_{k+1})\sqrt k
  \ge\|b\|_2=1.
\]
Thus every $u$ lies within angular distance
$\arccos(1/\sqrt{H_d})$ of the orbit.
\end{proof}

The embedded obstruction tends to $\pi/2$, whereas the abstract cross-polytope stays at
distance $\pi/4$.

\section{Logarithmic stability on the sphere}

Write $B_2^d=\{x\in\R^d:\|x\|_2\le1\}$. We use the following real-Hilbert-space form
of Cuesta's stability theorem for global almost symmetries. Cuesta states the result for a
finite-dimensional Hilbert--Schmidt space. In the proof of \cite[Proposition 7]{Cuesta}, the
only properties of that space used are its real Hilbert structure and its real dimension;
running the same argument for a real Hilbert space of dimension $d$ gives the statement
below. We use $1+\log d$ to make the estimate uniform in small dimensions.

\begin{theorem}[Cuesta \cite{Cuesta}]
There is a universal $C$ such that the following holds for $d\ge2$. If a continuous map
$F:B_2^d\to B_2^d$ satisfies
\[
  |\langle F(x),F(y)\rangle-\langle x,y\rangle|\le\eta
  \qquad(x,y\in B_2^d),
\]
then there is a linear map $A:\R^d\to\R^d$ such that
\[
  \sup_{x\in B_2^d}\|F(x)-Ax\|_2
  \le C\sqrt\eta\,(1+\log d).
\]
\end{theorem}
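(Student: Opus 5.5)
This is Cuesta's theorem transplanted from Hilbert--Schmidt space to $\R^d$, so the plan is to rerun the proof of \cite[Proposition 7]{Cuesta} and check that only the real Hilbert structure and the real dimension $d$ enter. The strategy is linearization in three steps: show $F$ is approximately additive and homogeneous, fix a candidate linear map from the values on an orthonormal basis, and then control $\sup_x\|F(x)-Ax\|_2$ by a dyadic-scale argument, which is where $\log d$ arises.

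\emph{Step 1: a pointwise identity from almost inner products.} For $x,y$ with $x+y\in B_2^d$ (rescale by $1/2$ if necessary), expand
\[
\|F(x+y)-F(x)-F(y)\|_2^2
\]
into inner products of $F$-values. Each term is within $\eta$ of the corresponding inner product of $x+y,x,y$, and the exact expression vanishes. This gives $\|F(x+y)-F(x)-F(y)\|_2\le C\sqrt\eta$. In the same way, $\|F(tx)-tF(x)\|_2\le C\sqrt\eta$ for $|t|\le1$.

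\emph{Step 2: the candidate $A$.} Let $e_1,\dots,e_d$ be an orthonormal basis and set $f_i=F(e_i)$. The Gram matrix of $(f_i)$ is $\eta$-close entrywise to the identity. Define $A$ by $Ae_i=f_i$. The naive estimate, summing $d$ additivity errors along $x=\sum x_ie_i$, gives an error of order $d\sqrt\eta$, which is far too large.

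\emph{Step 3: orthogonal projections instead of coordinatewise sums.} To reach $\log d$, avoid summing errors term by term. Compare $\langle F(x),f_i\rangle$ with $\langle x,e_i\rangle$: they differ by at most $\eta$ for every $i$. Hence the projection of $F(x)$ onto $\mathrm{span}(f_i)$ is controlled through the near-orthonormal frame $(f_i)$, while the component orthogonal to that span must be small. The latter follows because $\|F(x)\|^2\approx\|x\|^2\approx\sum_i\langle F(x),f_i\rangle^2$. Both facts hold only up to errors that are entrywise $\eta$ but must be summed over $d$ coordinates. Entrywise control therefore only gives operator-norm control of the Gram defect of order $d\eta$, which is again too weak.

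\emph{Main obstacle: the dyadic decomposition.} Cuesta's device is a multiscale argument. Split the coordinates of $x$ into $O(\log d)$ dyadic blocks by magnitude, or equivalently build the basis hierarchically by halving subspaces. Apply the almost-additivity of Step 1 once per level of a binary tree, so that errors accumulate additively over the $\log_2 d$ levels rather than over the $d$ coordinates. Each level costs $C\sqrt\eta$, for a total of $C\sqrt\eta\log d$. The continuity of $F$ and the argument's use of the Hilbert structure are dimension-agnostic, so nothing beyond the real dimension $d$ enters. I would follow Cuesta's tree construction verbatim, replacing Hilbert--Schmidt inner products by $\langle\cdot,\cdot\rangle$ on $\R^d$. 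The delicate point to check is that the rescalings by $1/2$ needed to remain in $B_2^d$ at each level do not compound multiplicatively. I expect they do not, since homogeneity errors are additive at $C\sqrt\eta$ per level. Finally, replacing $\log d$ by $1+\log d$ absorbs small $d$ into the constant.
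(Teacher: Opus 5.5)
Your opening reduction (rerun Cuesta's Proposition 7 and check that only the real Hilbert structure and the real dimension are used) is all the paper does; it gives no argument beyond that citation. The trouble is the mechanism you sketch in its place, which fails already at Step 2. The candidate $Ae_i=F(e_i)$ is not a valid choice, so no later bookkeeping can yield the bound. Let $d\ge4$, $\epsilon=d^{-1/2}$, $v$ a unit vector, $\mathbf 1=\sum_ie_i$, $\phi(x)=\max\{0,1-4\min_i\|x-e_i\|_2\}$, and $F(x)=(1+\epsilon)^{-1}\bigl(x+\epsilon\phi(x)v\bigr)$. This is a continuous map $B_2^d\to B_2^d$ satisfying the hypothesis with $\eta=5\epsilon$; indeed it is within $\eta$ of the linear map $(1+\epsilon)^{-1}I$. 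Your recipe gives $Ay=(1+\epsilon)^{-1}\bigl(y+\epsilon\langle y,\mathbf 1\rangle v\bigr)$. Since $\phi(\mathbf 1/\sqrt d)=0$, at $x=\mathbf 1/\sqrt d$ one gets $\|F(x)-Ax\|_2=\epsilon\sqrt d/(1+\epsilon)\ge1/2$, whereas $C\sqrt\eta\,(1+\log d)\asymp d^{-1/4}\log d\to0$. Errors of size $O(\eta)$ at the $d$ frame vectors can be coherent and add up to $\eta\sqrt d$. So the linear map has to be built from global information about $F$, not from its values on a single orthonormal frame.

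The multiscale step is also miscounted. A binary tree whose leaves are the $d$ coordinates has $d-1$ internal nodes, and reassembling $F(x)$ from the leaves uses Step 1 at every one of them, not once per level. Step 1 gives $C\sqrt\eta$ per node regardless of the lengths involved, because the hypothesis is an absolute error $\eta$. The bookkeeping therefore yields $C(d-1)\sqrt\eta$, no better than the naive bound. Nor is there hidden cancellation. In the example, $\phi$ vanishes at every node $d^{-1/2}\sum_{i\in S}e_i$, so $F$ is exactly additive along the tree. The whole discrepancy is the sum of the $d$ leaf terms $F(e_i/\sqrt d)-d^{-1/2}F(e_i)=-\epsilon(1+\epsilon)^{-1}d^{-1/2}v$, each far smaller than $\sqrt\eta$ but all pointing the same way. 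Splitting instead by coordinate magnitude into $O(\log d)$ blocks costs only $O(\sqrt\eta\log d)$ at the block level. However, it merely reduces matters to flat vectors $m^{-1/2}\sum_{i\in S}\pm e_i$, for which Step 3 gives only $\|F(x)-Ax\|_2^2\lesssim m\eta$, and that is exactly where the example lives. So the ingredient that is supposed to produce $\log d$ is missing, and ``follow Cuesta's tree construction verbatim'' is a guess at that proof rather than an argument for it.
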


We record the spherical form needed below.

\begin{lemma}[Logarithmic stability of rough sphere isometries]
Let $d\ge2$. There are universal constants $c,C>0$ such that if $f:\Sd\to\Sd$ satisfies
\[
  |d_S(f(u),f(v))-d_S(u,v)|\le\eta
  \qquad(u,v\in\Sd),
\]
and $\sqrt\eta(1+\log d)\le c$, then some $U\in O(d)$ satisfies
\[
  \sup_{u\in\Sd}\|f(u)-Uu\|_2
  \le C\sqrt\eta\,(1+\log d).
\]
No continuity of $f$ is assumed.
\end{lemma}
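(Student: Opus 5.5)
The plan is to reduce to Cuesta's theorem by extending $f$ radially to the ball, after first replacing $f$ by a continuous map with comparable error. The inner-product defect is controlled by the angular defect via $\langle u,v\rangle=\cos d_S(u,v)$ and the fact that $\cos$ is $1$-Lipschitz. So $|\langle f(u),f(v)\rangle-\langle u,v\rangle|\le\eta$ on $\Sd$.

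\emph{Step 1 (continuity).} Since Cuesta's theorem assumes continuity, we first regularize $f$. Choose a maximal $\rho$-separated set $\{u_j\}$ in $\Sd$, with $\rho=\eta$. Take a partition of unity $\{\phi_j\}$ subordinate to the balls $B(u_j,2\rho)$, and put
\[
  \tilde g(u)=\sum_j\phi_j(u)f(u_j),\qquad g=\tilde g/\|\tilde g\|_2 .
\]
All the points $f(u_j)$ that appear lie in $f$ of a $2\rho$-ball. By the rough isometry bound they lie within angular distance $4\rho+\eta$ of each other, so $\tilde g$ is a convex combination of nearby unit vectors. Hence $\|\tilde g\|_2\ge 1-O(\eta)$, $g$ is continuous, and $\|g(u)-f(u)\|_2=O(\eta)$. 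In particular $g$ still satisfies the inner-product estimate with defect $O(\eta)$.

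\emph{Step 2 (radial extension and Cuesta).} Define $F(x)=\|x\|_2\,g(x/\|x\|_2)$ and $F(0)=0$. This map is continuous on $B_2^d$ and maps into $B_2^d$. For $x=su$ and $y=tv$ with $s,t\le1$,
\[
  |\langle F(x),F(y)\rangle-\langle x,y\rangle|=st\,|\langle g(u),g(v)\rangle-\langle u,v\rangle|\le C_0\eta .
\]
Cuesta's theorem then gives a linear $A$ with $\sup_{u\in\Sd}\|g(u)-Au\|_2\le C_1\sqrt\eta(1+\log d)=:\varepsilon$. Combined with Step 1, the same bound with a larger constant holds for $f$, since $\eta\le\sqrt\eta$ when $\eta\le1$.

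\emph{Step 3 (orthogonalize $A$).} This is the step that needs the smallness hypothesis. For unit vectors $u,v$, we have $|\langle Au,Av\rangle-\langle u,v\rangle|\le C_0\eta+2\varepsilon+\varepsilon^2=O(\varepsilon)$. Polarization then gives $\|A^{T}A-I\|_{\op}\le C_2\varepsilon$. To see this, apply the bound to $u=v$ to get the diagonal quadratic form, and use the symmetric form $\langle (A^TA-I)u,u\rangle$ over all unit $u$. So every singular value $\sigma$ of $A$ satisfies $|\sigma^2-1|\le C_2\varepsilon$. If $c$ is chosen so that $C_2\varepsilon\le1/2$, the polar decomposition $A=UP$ gives $U\in O(d)$ with $\|P-I\|_{\op}\le C_2\varepsilon$. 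Therefore $\|Au-Uu\|_2\le C_2\varepsilon$, and the triangle inequality gives the claim.

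The main obstacle I expect is Step 1, not the reduction itself. The discontinuity of $f$ must be removed at cost $O(\eta)$ rather than $O(\sqrt\eta)$. That is harmless anyway, since the final bound is $\sqrt\eta\log d$. What has to be done carefully is verifying that the normalization $\|\tilde g\|_2$ stays bounded below uniformly in $d$. This follows because every averaged point lies in a cap of angular radius $O(\eta)$ around $f(u)$, so $\langle\tilde g(u),f(u)\rangle\ge\cos(O(\eta))$, which is independent of the dimension.
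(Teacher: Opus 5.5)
Your proposal is correct and follows the same route as the paper: the cosine-Lipschitz bound, an $O(\eta)$ continuous regularization (your partition of unity over an $\eta$-net plays the role of the paper's barycentric interpolation on a triangulation of mesh $\eta^2$), radial extension to the ball, Cuesta's theorem, and the polar factor of $A$, whose singular values are controlled by $\|Au\|_2\approx1$ on unit vectors as in the paper. The only detail to add is the degenerate case $\eta=0$, where $\rho=\eta$ cannot be used; there $f$ preserves inner products exactly and is therefore the restriction of an orthogonal map, which the paper handles separately.
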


\begin{proof}
The case $\eta=0$ follows directly from exact preservation of inner products, so assume
$\eta>0$. Since cosine is $1$-Lipschitz on $[0,\pi]$,
\begin{equation}
  |\langle f(u),f(v)\rangle-\langle u,v\rangle|\le\eta.
  \label{eq:inner-product}
\end{equation}

We first replace $f$ by a continuous map at $O(\eta)$ additional error. Choose a
triangulation of $\Sd$ of angular mesh at most $\eta^2$. On each simplex, extend the values
of $f$ at its vertices by barycentric interpolation in $\R^d$. If $x_i,x_j$ are vertices of
one simplex, then
\[
  d_S(f(x_i),f(x_j))\le d_S(x_i,x_j)+\eta\le2\eta,
\]
so the images have chordal diameter at most $2\eta$. Moreover,
$\langle f(x_i),f(x_j)\rangle\ge1-C\eta^2$. Hence every barycentric combination $h$ of
these images satisfies $\|h\|_2\ge1-C\eta^2$. We may therefore normalize the interpolation
to obtain a continuous $g:\Sd\to\Sd$.

For $u$ in a simplex and any vertex $x_i$ of that simplex, both $g(u)$ and $f(u)$ are
$O(\eta)$-close to $f(x_i)$. Consequently
\begin{equation}
  \sup_{u\in\Sd}\|g(u)-f(u)\|_2\le C\eta.
  \label{eq:smoothing}
\end{equation}
It follows from \eqref{eq:inner-product}--\eqref{eq:smoothing} that
\[
  |\langle g(u),g(v)\rangle-\langle u,v\rangle|\le C\eta
  \qquad(u,v\in\Sd).
\]

Extend $g$ radially to the unit ball by $F(ru)=rg(u)$ for $0\le r\le1$. Then
\[
  |\langle F(x),F(y)\rangle-\langle x,y\rangle|\le C\eta
  \qquad(x,y\in B_2^d).
\]
Cuesta's theorem gives a linear map $A$ with
\[
  q:=\sup_{u\in\Sd}\|g(u)-Au\|_2
  \le C\sqrt\eta\,(1+\log d).
\]
Since $\|g(u)\|_2=1$, we have $1-q\le\|Au\|_2\le1+q$ for every unit vector $u$.
Thus every singular value of $A$ lies in $[1-q,1+q]$. If $q<1/2$, the polar factor
$U$ of $A$ is orthogonal and
\[
  \|A-U\|_\op\le q.
\]
Combining this with \eqref{eq:smoothing} proves the assertion after adjusting the universal
constant.
\end{proof}

\section{From a homogeneous approximation to an almost action}

We also use operator-norm Ulam stability for finite-group representations. Kazhdan's
proof is usually stated for unitary operators. Its averaging and polar-decomposition steps
are valid over a real Hilbert space and give the orthogonal form below.

\begin{theorem}[Kazhdan \cite{Kazhdan}]
There are universal constants $\alpha_0,C>0$ with the following property. Let $\Gamma$ be
finite. If $V:\Gamma\to O(d)$, $V_e=I$, and
\[
  \|V_\gamma V_\delta-V_{\gamma\delta}\|_\op
  \le\alpha\le\alpha_0,
\]
then there is an orthogonal representation $\rho:\Gamma\to O(d)$ satisfying
\[
  \|V_\gamma-\rho(\gamma)\|_\op\le C\alpha.
\]
\end{theorem}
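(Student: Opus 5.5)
We prove the orthogonal form of Kazhdan's theorem by following his averaging-and-polar scheme. The plan has three steps: build an approximate representation by averaging, iterate to converge to an exact homomorphism, then make it orthogonal.

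\emph{Averaging step.} Given $V$ with defect $\alpha$, define
\[
W_\gamma=\frac1{|\Gamma|}\sum_{\delta\in\Gamma}V_{\delta}^{-1}V_{\delta\gamma}=\frac1{|\Gamma|}\sum_{\delta}V_\delta^{\top}V_{\delta\gamma}.
\]
Since each $V_\delta$ is orthogonal, $V_\delta^{-1}=V_\delta^{\top}$. Moreover $\|V_\delta^\top V_{\delta\gamma}-V_\gamma\|_\op=\|V_{\delta\gamma}-V_\delta V_\gamma\|_\op\le\alpha$, so $\|W_\gamma-V_\gamma\|_\op\le\alpha$.

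The key computation is the new defect. We have
\[
W_\gamma W_\eta-W_{\gamma\eta}=\frac1{|\Gamma|^2}\sum_{\delta,\epsilon}\bigl(V_\delta^\top V_{\delta\gamma}V_\epsilon^\top V_{\epsilon\eta}-V_\delta^\top V_{\delta\gamma\eta}\bigr).
\]
Substitute $\epsilon=\delta\gamma\zeta$ and write $V_{\delta\gamma}V_\epsilon^\top=I+E$ with $\|E\|_\op=O(\alpha)$. The first-order terms in $E$ cancel after averaging over the group, which is exactly Kazhdan's cancellation. What remains is an expression bounded by $C\alpha^2$; in fact one shows $\|W_\gamma W_\eta-W_{\gamma\eta}\|_\op\le C'\alpha^2$. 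This cancellation is the main obstacle, and it is where the care lies. The idea I will use is to write $V_\delta^\top V_{\delta\gamma}=V_\gamma+D(\delta,\gamma)$, where $D$ is an approximate 1-cocycle defect. Then the linear part of the error is $\frac{1}{|\Gamma|}\sum(\partial D)$, which vanishes by averaging, and only the products $D\cdot D=O(\alpha^2)$ survive. All of these manipulations use only the real Hilbert structure.

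\emph{Polar correction and iteration.} The averaged $W_\gamma$ need not be orthogonal. However, $\|W_\gamma^\top W_\gamma-I\|_\op=O(\alpha)$. More sharply, $W_\gamma^\top=W_{\gamma^{-1}}$ follows from the averaging formula by reindexing, and the defect is quadratic, so $\|W_\gamma^\top W_\gamma-I\|_\op=O(\alpha^2)$. Replace $W_\gamma$ by its real polar factor $V'_\gamma=W_\gamma(W_\gamma^\top W_\gamma)^{-1/2}\in O(d)$. This changes it by $O(\alpha^2)$ and preserves $V'_e=I$ (note $W_e=I$). The new family has defect $\alpha_1\le K\alpha^2$ and satisfies $\|V'_\gamma-V_\gamma\|_\op\le\alpha+K\alpha^2$.

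Choose $\alpha_0$ with $K\alpha_0\le1/2$. Iterating gives $\alpha_{k+1}\le K\alpha_k^2\le\alpha_k/2$, so the total displacement is at most $\sum_k(\alpha_k+K\alpha_k^2)\le 3\alpha$. The sequence is Cauchy in the compact group $O(d)^\Gamma$, and it converges to some $\rho$ with zero defect, so $\rho$ is a homomorphism into $O(d)$. This gives $\|V_\gamma-\rho(\gamma)\|_\op\le C\alpha$ with universal constants independent of $d$ and $|\Gamma|$, since no step uses either quantity.
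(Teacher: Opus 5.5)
Your architecture is the one the paper has in mind when it says Kazhdan's averaging and polar-decomposition steps go through over $\R$. The averaged family $W_\gamma=|\Gamma|^{-1}\sum_\delta V_\delta^{\top}V_{\delta\gamma}$, the identities $W_e=I$ and $W_\gamma^\top=W_{\gamma^{-1}}$, the polar correction, and the quadratic iteration with total displacement at most $3\alpha$ are all fine. The gap is the one estimate that carries the whole theorem, $\|W_\gamma W_\eta-W_{\gamma\eta}\|_{\op}\le C'\alpha^2$. You assert it (``one shows'') rather than prove it, and the justification you sketch does not work as written. With $\epsilon=\delta\gamma\zeta$ one has $V_{\delta\gamma}V_\epsilon^\top\approx V_{\delta\gamma}V_\zeta^\top V_{\delta\gamma}^\top$, which is not $I+O(\alpha)$ unless $\zeta=e$. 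Also, the first-order terms do not cancel ``by averaging'' alone. They cancel only because the multiplicative defect satisfies the associativity (2-cocycle) identity; averaging then turns that identity into the statement that the averaged correction cancels the defect to first order.

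Here is one way to close it. Put $T(g,h)=(V_gV_h)^{-1}V_{gh}=I+t(g,h)$, so $\|t(g,h)\|_{\op}\le\alpha$, and set $\bar t(h)=|\Gamma|^{-1}\sum_g t(g,h)$; then $W_h=V_h(I+\bar t(h))$. Expanding $V_{abc}$ as $V_{(ab)c}$ and as $V_{a(bc)}$ gives the exact identity $V_c^{-1}T(a,b)V_c\,T(ab,c)=T(b,c)\,T(a,bc)$. Hence $V_c^{-1}t(a,b)V_c+t(ab,c)-t(b,c)-t(a,bc)$ has norm at most $2\alpha^2$. Averaging over $a$ (using that $a\mapsto ab$ is a bijection of $\Gamma$) gives $\|V_c^{-1}\bar t(b)V_c+\bar t(c)-t(b,c)-\bar t(bc)\|_{\op}\le2\alpha^2$. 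Since $V_{bc}=V_bV_cT(b,c)$, we have
\[
W_bW_c-W_{bc}=V_bV_c\bigl[(I+V_c^{-1}\bar t(b)V_c)(I+\bar t(c))-(I+t(b,c))(I+\bar t(bc))\bigr].
\]
The bracket is the averaged cocycle expression plus two products, each of norm at most $\alpha^2$, so $\|W_bW_c-W_{bc}\|_{\op}\le4\alpha^2$. With this in hand the rest of your proof goes through verbatim with $C'=4$, and no step uses $d$, $|\Gamma|$, or complex scalars.
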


Let $S=\Sd$ and suppose that $X$ is finite homogeneous with
$d_{GH}(X,S)<\varepsilon$. Choose a correspondence $R\subset X\times S$ with
distortion $D<2\varepsilon$. Write
\[
  F_x=\{u\in S:(x,u)\in R\}.
\]
Choose anchors $a_x\in F_x$ and a selector $s:S\to X$ with $u\in F_{s(u)}$. Fix
$x_0\in X$, set $u_0=a_{x_0}$, and arrange $s(u_0)=x_0$.

Let $\Gamma=\Isom(X)$, which is finite. Define
\[
  f_\gamma(u)=a_{\gamma s(u)}.
\]
For $u,v\in S$, the correspondence estimate applied before and after the isometry $\gamma$
gives
\begin{align*}
  |d_S(f_\gamma(u),f_\gamma(v))-d_S(u,v)|
  &\le |d_S(a_{\gamma s(u)},a_{\gamma s(v)})
          -d_X(\gamma s(u),\gamma s(v))|\\
  &\quad+|d_X(s(u),s(v))-d_S(u,v)|\\
  &\le2D.
\end{align*}
Thus
\begin{equation}
  |d_S(f_\gamma(u),f_\gamma(v))-d_S(u,v)|\le2D.
  \label{eq:rough-isometry}
\end{equation}

Furthermore, $f_\delta(u)$ lies both in $F_{\delta s(u)}$ and in
$F_{s(f_\delta(u))}$. Hence
\[
  d_X(\delta s(u),s(f_\delta(u)))\le D.
\]
After applying $\gamma$ and comparing the two anchors through the correspondence, we obtain
\begin{equation}
  d_S(f_\gamma(f_\delta(u)),f_{\gamma\delta}(u))\le2D.
  \label{eq:almost-action}
\end{equation}

\section{The logarithmic-squared lower bound}

\begin{theorem}
There is a universal constant $c>0$ such that, for all sufficiently large $d$,
\[
  \inf_{X\text{ finite homogeneous}}d_{GH}(X,\Sd)
  \ge \frac{c}{(1+\log d)^2}\min\{1,\beta_d^2\}.
\]
Consequently, for all sufficiently large $n$,
\[
  \delta_n\ge\frac{c}{(1+\log(n+1))^2}.
\]
\end{theorem}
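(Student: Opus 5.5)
We argue by contradiction, chaining the four ingredients in the order announced in Section 1. Let $X$ be finite homogeneous with $d_{GH}(X,\Sd)<\varepsilon$, and fix a correspondence $R$ of distortion $D<2\varepsilon$. Build the maps $f_\gamma$ of Section 5 and put $\eta:=2D$. By \eqref{eq:rough-isometry}, each $f_\gamma$ is an $\eta$-rough isometry of $\Sd$. We aim to show that if $\varepsilon\le c\,\min\{1,\beta_d^2\}/(1+\log d)^2$, then some finite orthogonal orbit has covering radius $<\beta_d$, which contradicts the definition of $\beta_d$. For large $d$, Green's theorem gives $\beta_d\ge\pi/2-o(1)\ge 1$, so $\min\{1,\beta_d^2\}=1$ and the second assertion follows from the first.

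\textbf{Step 1 (orthogonal approximants).} Set $\tau:=\sqrt\eta\,(1+\log d)$. Choosing $c$ small makes $\tau\le c'$, so the logarithmic stability lemma gives $U_\gamma\in O(d)$ with $\sup_u\|f_\gamma(u)-U_\gamma u\|_2\le C\tau$. For $\gamma=e$, the map $f_e(u)=a_{s(u)}$ is within $D$ of the identity in angle. Hence we may take $U_e=I$ after replacing $U_e$ by $I$, which costs only $O(D)\le O(\tau)$.

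\textbf{Step 2 (almost representation).} Combine \eqref{eq:almost-action} with Step 1:
\[
\|U_\gamma U_\delta u-U_{\gamma\delta}u\|\le \|U_\gamma(U_\delta u-f_\delta(u))\|+\|U_\gamma f_\delta(u)-f_\gamma(f_\delta(u))\|+\|f_\gamma f_\delta(u)-f_{\gamma\delta}(u)\|+\|f_{\gamma\delta}(u)-U_{\gamma\delta}u\|,
\]
which is $\le C\tau$ uniformly in $u\in\Sd$. Taking the supremum over $u$ gives $\|U_\gamma U_\delta-U_{\gamma\delta}\|_\op\le C\tau$. Kazhdan's theorem, applicable once $C\tau\le\alpha_0$, yields an orthogonal representation $\rho$ of $\Gamma$ with $\|U_\gamma-\rho(\gamma)\|_\op\le C\tau$. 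This is the crucial point of the argument: Kazhdan stability costs only a constant factor and no further dimension loss, so the operator-norm bound is inherited directly from Cuesta's uniform bound.

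\textbf{Step 3 (density of one orbit).} Take $u_0=a_{x_0}$ as in Section 5. For any $v\in\Sd$, let $x=s(v)$. By transitivity there is $\gamma$ with $\gamma x_0=x$. Then $f_\gamma(u_0)=a_{\gamma s(u_0)}=a_x$, and $d_S(a_x,v)\le D$ since both lie in $F_x$ and $d_X(x,x)=0$. Therefore
\[
\|\rho(\gamma)u_0-v\|\le\|\rho(\gamma)u_0-U_\gamma u_0\|+\|U_\gamma u_0-f_\gamma(u_0)\|+\|a_x-v\|\le C\tau .
\]
So the finite orthogonal orbit $\rho(\Gamma)u_0$ (with $\rho(\Gamma)\le O(d)$ finite) is a $C\tau$-net in chordal distance, hence $C'\tau$ in angle. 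By definition this forces $\beta_d\le C'\tau=C'\sqrt{2D}\,(1+\log d)$, so $\varepsilon>D/2\ge \beta_d^2/(4C'^2(1+\log d)^2)$. If instead the smallness hypotheses of Steps 1 and 2 fail, then $\tau\ge c''$, which gives $\varepsilon\ge c''^2/(4(1+\log d)^2)$. The two cases together give the stated bound with the factor $\min\{1,\beta_d^2\}$.

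\textbf{Main obstacle.} The delicate step is Step 2. It requires the almost-multiplicativity bound to hold uniformly in operator norm, so it must be derived from the pointwise bound on all of $\Sd$, which is why we need the sup-norm form of the stability lemma. It also requires tracking that $U_e=I$ and that all errors remain $O(\tau)$ and not $O(\sqrt\tau)$. I would check carefully that the error from \eqref{eq:almost-action} is only $O(D)$, which is dominated by $\tau$, so that the sole logarithmic loss is the single factor $(1+\log d)$ from Cuesta. Squaring it is what produces the $(1+\log d)^{-2}$ in the statement.
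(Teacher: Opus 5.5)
Your proposal is correct and follows the paper's proof step for step. The steps are: linearization via the logarithmic stability lemma, almost-multiplicativity in operator norm, Kazhdan correction to a genuine representation, density of the orbit of $u_0$ via transitivity, comparison with $\beta_d$, and a separate large-$\tau$ case. The only deviation is a harmless simplification: you set $U_e=I$ directly, which is valid since $\|f_e(u)-u\|_2\le D\le C\tau$, whereas the paper keeps the lemma's $U_e$ and normalizes via $V_\gamma=U_\gamma U_e^{-1}$.
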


\begin{proof}
Use the construction of Section 5 and put
\[
  L=1+\log d,
  \qquad q=C_0\sqrt D\,L,
\]
where $C_0$ is a sufficiently large universal constant. If $q$ exceeds a sufficiently small
universal threshold, then $D\ge cL^{-2}$, which implies the asserted bound after changing
$c$. We may therefore assume that $q$ is small enough for all the stability statements below.

Lemma 4.2, applied to \eqref{eq:rough-isometry}, gives $U_\gamma\in O(d)$ such that
\begin{equation}
  \sup_{u\in S}\|f_\gamma(u)-U_\gamma u\|_2\le q.
  \label{eq:linearization}
\end{equation}
For the identity element, $f_e(u)=a_{s(u)}$ and $u$ belong to the same fiber, so
$d_S(f_e(u),u)\le D$. Hence
\[
  \|U_e-I\|_\op\le q+D\le Cq.
\]

Before normalizing at the identity, \eqref{eq:almost-action} and
\eqref{eq:linearization} give, for every $u\in S$,
\begin{align*}
  \|U_\gamma U_\delta u-U_{\gamma\delta}u\|_2
  &\le \|U_\delta u-f_\delta(u)\|_2
      +\|U_\gamma f_\delta(u)-f_\gamma(f_\delta(u))\|_2\\
  &\quad+\|f_\gamma(f_\delta(u))-f_{\gamma\delta}(u)\|_2
      +\|f_{\gamma\delta}(u)-U_{\gamma\delta}u\|_2\\
  &\le3q+2D\le Cq.
\end{align*}
Thus $\|U_\gamma U_\delta-U_{\gamma\delta}\|_\op\le Cq$. Set
$V_\gamma=U_\gamma U_e^{-1}$. Then $V_e=I$, and
$\sup_{u\in S}\|f_\gamma(u)-V_\gamma u\|_2\le Cq$. Moreover,
\begin{align*}
  \|V_\gamma V_\delta-V_{\gamma\delta}\|_\op
  &=\|U_\gamma U_e^{-1}U_\delta-U_{\gamma\delta}\|_\op\\
  &\le \|U_\gamma U_\delta-U_{\gamma\delta}\|_\op
      +\|U_e^{-1}-I\|_\op\\
  &\le Cq.
\end{align*}
Choose the preceding threshold so that $Cq\le\alpha_0$. Kazhdan's theorem produces an
orthogonal representation $\rho:\Gamma\to O(d)$ satisfying
\begin{equation}
  \|V_\gamma-\rho(\gamma)\|_\op\le Cq.
  \label{eq:representation}
\end{equation}

We claim that $\rho(\Gamma)u_0$ is $Cq$-dense in $S$. Given $y\in S$, choose
$x\in X$ with $y\in F_x$ and choose $\gamma\in\Gamma$ with $\gamma x_0=x$. Since
$y,a_x\in F_x$,
\[
  d_S(y,a_x)\le D,
  \qquad a_x=f_\gamma(u_0).
\]
For unit vectors $v,w$, one has
$d_S(v,w)\le(\pi/2)\|v-w\|_2$. By the normalized linearization estimate and
\eqref{eq:representation},
\[
  \|f_\gamma(u_0)-\rho(\gamma)u_0\|_2
  \le \|f_\gamma(u_0)-V_\gamma u_0\|_2
     +\|V_\gamma-\rho(\gamma)\|_\op
  \le Cq.
\]
Therefore
\[
  d_S(y,\rho(\gamma)u_0)
  \le D+Cq\le Cq.
\]
By the definition of $\beta_d$,
\[
  \beta_d\le Cq=C\sqrt D\,(1+\log d).
\]
Consequently
\[
  D\ge\frac{c\beta_d^2}{(1+\log d)^2}.
\]
Together with the large-$q$ case, this proves
\[
  D\ge\frac{c}{(1+\log d)^2}\min\{1,\beta_d^2\}.
\]
Since $D$ may be taken arbitrarily close to twice the Gromov--Hausdorff distance, the first
assertion follows. Green's theorem gives $\beta_d\ge c_0>0$ for all sufficiently large $d$,
which proves the second.
\end{proof}

\section{The remaining stability problem}

The proof isolates a concrete route toward the uniform-gap conjecture. Lemma 4.2 supplies an
error of order
\[
  \sqrt\eta\,(1+\log d)
\]
when an individual rough sphere isometry is replaced by an orthogonal map. Replacing this by
an estimate of order
\[
  \sqrt{\eta(1+\log d)}
\]
would improve the conclusion to $\delta_n\gtrsim1/\log n$.

The maps arising here are not arbitrary: they satisfy the approximate multiplication law
\eqref{eq:almost-action}, and the orbit of $u_0$ is approximately dense. It remains open
whether this collective
structure permits a stronger simultaneous correction than one obtains by stabilizing the maps
one at a time. A dimension-free simultaneous correction would imply a uniform
Gromov--Hausdorff gap.

\paragraph{Disclosure.}
The problem was suggested by the author. ChatGPT provided the proofs and wrote the paper.

\end{document}